\newlength{\defbaselineskip}
\newcommand{\setlinespacing}[1]%
           {\setlength{\baselineskip}{#1 \defbaselineskip}}
\newcommand{\C}{\mathbf{C}}
\newtheorem{definition}{Definition}
\newtheorem{theorem}{Theorem}
\newtheorem{proposition}{Proposition}
\newtheorem{remark}{Remark}
\newtheorem{corollary}{Corollary}
\newtheorem{condition}{Condition}
\makeatletter \@addtoreset{equation}{section} \makeatother
\author{Tamar Janelidze-Gray}
\title{A new approach to $\mathbf{S}$-protomodular categories}
\begin{document}
\maketitle

\begin{abstract}We propose a new approach to $\mathbf{S}$-protomodular categories in the sense of D.\ Bourn, N.\ Martins-Ferreira, A.\ Montoli, and M.\ Sobral. Instead of points (=split epimorphisms) it uses generalized points, which we define as composable pairs of morphisms whose composites are pullback stable regular epimorphisms. This approach is convenient in describing the connection between split and regular Schreier epimorphisms of monoids.
\end{abstract}

\section{Introduction}
An $\mathbf{S}$\textit{-protomodular category} in the sense of D. Bourn, N. Martins-Ferreira, A. Montoli, and M. Sobral (see [3] and [5]) is a pointed category $\mathbf{C}$ (with finite limits) equipped with a class of distinguished split epimorphisms (=``points'') satisfying certain conditions; we will recall the precise definition in Section 3. The introduction to [5] says (among many other things):\\

``Note that our approach to relative non-abelian homological algebra is different from the one initiated by T. Janelidze in \cite{[J]} and developed by her in several later papers: in our work, the word ``relative'' refers to a chosen class of points, i.e. of split epimorphisms with specified splitting, while in T. Janelidze's papers it refers to a chosen class of (not necessarily split) regular epimorphisms.''\\      

Following this remark, in a sense, in this paper we develop what might be called \textit{non-split (epimorphism) approach to the theory of $\mathbf{S}$-protomodular categories}. We replace
\textit{points} $(f,s)=$
\begin{equation*}
\xymatrix{A\ar@<0.5ex>[r]^-f&B,\,\,fs=1_B\ar@<0.5ex>[l]^-s}
\end{equation*}
with pairs $(f,g)=$
\begin{equation*}
\xymatrix{C\ar[r]^g&A\ar[r]^f&B}
\end{equation*}
where the composite $fg$ is required to be a pullback stable regular epimorphism, which we call \textit{generalized points}. We impose certain conditions on a class $\mathbf{T}$ of generalized points in a category $\mathbf{C}$, and call $\mathbf{C}$ a $\mathbf{T}$\textit{-protomodular category} when those conditions are satisfied. Our main result (Theorem 3.4) describes, for an arbitrary pointed category $\mathbf{C}$ with finite limits, a bijection between the collections of classes $\mathbf{S}$ making $\mathbf{C}$ an $\mathbf{S}$-protomodular category and the collections of classes $\mathbf{T}$ making $\mathbf{C}$ a $\mathbf{T}$-protomodular category. In this formulation we have in mind of course that $\mathbf{S}$ always denotes a class of points while $\mathbf{T}$ always denotes a class of generalized points.

In the last section we take $\mathbf{C}$ to be the category of monoids and prove Theorems 4.5 and 4.6, which show that using generalized points is convenient in describing the connection between split and regular Schreier epimorphisms of monoids.

A remark on terminology: The terms \textit{strong (point)} and \textit{regular (Schreier epimorphism)} we are using might sound confusing since there more standard and familiar \textit{strong epimorphisms} and \textit{regular epimorphisms} in general category theory. But this choice of terminology was already made several years ago in the literature we refer to.

\section{Generalized points}
Throughout this paper we assume that $\C$ is a pointed category with finite limits.
\begin{definition} A generalized point in $\C$ is a pair $(f,g)$, in which $f:A\rightarrow B$ and $g:C\rightarrow A$ are morphisms  in $\C$ such that the composite $fg:C\rightarrow B$ is a pullback stable regular epimorphism in $\C$.
\end{definition}
We will also write
\begin{equation}\label{diagram GP}\vcenter{\xymatrix{
		C\ar[r]^g & A \ar[r]^{f} & B}} 
\end{equation}
for the generalized point $(f,g)$ in $\C$. Note that since $fg$ is a pullback stable regular epimorphism, it follows that $f$ is also a pullback stable regular epimorphism (see Proposition 1.5 of \cite{[JST]}).


A morphism between two such generalized points $(f,g)$ and $(f',g')$ is a triple $(\alpha , \beta , \gamma)$, in which $\alpha : A\rightarrow A'$, $\beta : B\rightarrow B'$, and $\gamma : C\rightarrow C'$ are morphisms in $\C$ such that the diagram
\begin{equation}\label{Diagram morphism of GP}\vcenter{ \xymatrix@C=3pc@R=3pc{
C \ar[r]^g \ar[d]_{\gamma} & A\ar[d]_{\alpha} \ar[r]^-f & B \ar[d]^{\beta} \\
C' \ar[r]_{g'} & A'\ar[r]_-{f'} & B' }}
\end{equation}
commutes. 

We will denote by $\mathbf{GPt(\C)}$ the category of all generalized points in $\C$.
Note that if we take all those generalized points (\ref{diagram GP}) for which the composite $fg$ is the identity, then we obtain the category of points $\mathbf{Pt}(\C)$ in $\C$ in the sense of D. Bourn \cite{[B]}. We will also call such generalized points \textit{split generalized points.}


\begin{definition}\label{def strong generalized points} A generalized point $(f,g)$ is said to be a strong generalized point, if $g$ and the kernel of $f$ are jointly strongly epic. 
\end{definition}
In particular, a split generalized point that is strong is the same as a strong split epimorphism in the sense of \cite{[B1]}, and the same as a regular point in the sense of \cite{[MMS]}, and a strong point in the sense of \cite{[BMMS]}.

Let $(f,g)$ be a generalized point in $\C$ and let $x:X\rightarrow B$ be any morphism in $\C$. Taking the pullbacks $(A\times_BX, \pi_1,\pi_2)$ of $f$ and $x$, and $(C\times_BX, {\pi'_1},{\pi'_2})$ of $fg$ and $x$, we obtain the commutative diagram 
\begin{equation}\label{lemma diagram}\vcenter{
	\xymatrix@C=3pc{
	C\times_BX \ar@{.>}[r]^-{g\times1} \ar[d]_{{\pi'_1}}&	A\times_BX \ar[d]_{\pi_1} \ar[r]^-{\pi_2} & X\ar[d]^x \\
	C\ar[r]_g	&A \ar[r]_{f}& B} }
\end{equation}
in which $\pi_2(g\times1)=\pi'_2$, and not only the second square but also the first one is a pullback. Since $(f,g)$ is a generalized point, so is also the pair $(\pi_2,g\times1)$, and we informally say that $(\pi_2,g\times1)$ is the pullback of $(f,g)$ along $x$ in $\mathbf{GPt(\C)}$.


We have:
\begin{theorem}\label{thm pb reflects jse}	If in the commutative diagram (\ref{lemma diagram}), $(f,g)$ is a generalized point in $\C$, $x$ is a pullback stable regular epimorphism, and $\mathrm{ker}(\pi_2)$ and $g\times1$ are jointly strongly epic, then $\mathrm{ker}(f)$ and $g$ are also jointly strongly epic.
\end{theorem}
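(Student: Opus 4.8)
The plan is to establish the conclusion directly from the defining property of a jointly strongly epic pair: I take an arbitrary monomorphism $m\colon M\to A$ through which both $\mathrm{ker}(f)$ and $g$ factor, say $\mathrm{ker}(f)=mk'$ and $g=mg'$, and I aim to show that $m$ is an isomorphism.

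Two preliminary observations do most of the work. First, since the right-hand square of (\ref{lemma diagram}) is a pullback and a kernel is a pullback along $0\to B$, a pasting argument identifies $\mathrm{ker}(\pi_2)$ with $\mathrm{ker}(f)$: writing $k\colon K\to A$ for $\mathrm{ker}(f)$, the unique morphism $\bar k\colon K\to A\times_BX$ satisfying $\pi_1\bar k=k$ and $\pi_2\bar k=0$ is a kernel of $\pi_2$; what matters for the argument is the relation $\pi_1\bar k=k$. Second, since $\pi_1$ is a pullback of $x$ along $f$ and $x$ is a pullback stable regular epimorphism, $\pi_1$ is a regular, hence strong, epimorphism.

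Next I would pull the monomorphism $m$ back along $\pi_1$, obtaining a monomorphism $\bar m\colon\bar M\to A\times_BX$ (a pullback of a mono) together with its second projection $q\colon\bar M\to M$, so that $\pi_1\bar m=mq$. Then both $\bar k$ and $g\times1$ factor through $\bar m$: for $\bar k$, the pair $(\bar k,k')$ satisfies $\pi_1\bar k=k=mk'$ and therefore factors $\bar k$ through $\bar m$ by the universal property of the pullback $\bar M$; for $g\times1$, commutativity of the left square of (\ref{lemma diagram}) gives $\pi_1(g\times1)=g\pi'_1=m(g'\pi'_1)$, and the pair $(g\times1,\,g'\pi'_1)$ then factors $g\times1$ through $\bar m$. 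Since, by hypothesis, $\mathrm{ker}(\pi_2)$ (that is, $\bar k$) and $g\times1$ are jointly strongly epic and both factor through the monomorphism $\bar m$, it follows that $\bar m$ is an isomorphism. Finally, $\pi_1=\pi_1\bar m\bar m^{-1}=m\,(q\bar m^{-1})$ exhibits the strong epimorphism $\pi_1$ as factoring through the monomorphism $m$; hence $m$ is a split epimorphism, and a monic split epimorphism is an isomorphism. As $m$ was arbitrary, $\mathrm{ker}(f)$ and $g$ are jointly strongly epic.

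I do not anticipate a genuine obstacle. The step that deserves attention is the identification of $\mathrm{ker}(\pi_2)$ with $\mathrm{ker}(f)$ as a subobject of $A\times_BX$ compatibly with $\pi_1$, since it is exactly the relation $\pi_1\bar k=k$ that transports the assumed factorization of $\mathrm{ker}(f)$ through $m$ into a factorization of $\mathrm{ker}(\pi_2)$ through $\bar m$; this is a routine pullback-pasting computation. Everything else is a diagram chase through universal properties, together with the two standard facts that a pullback of a pullback stable regular epimorphism is a strong epimorphism and that a monic split epimorphism is an isomorphism.
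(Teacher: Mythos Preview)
Your proof is correct and follows essentially the same strategy as the paper: take a monomorphism $m$ through which $\mathrm{ker}(f)$ and $g$ factor, pull it back along $\pi_1$ (the paper builds this pullback as $M\times_BX$), show that $\mathrm{ker}(\pi_2)$ and $g\times1$ factor through the resulting monomorphism, conclude that the latter is an isomorphism, and descend. The only cosmetic difference is in the descent step: the paper invokes Proposition~1.6 of \cite{[JST]} (pullback stable regular epimorphisms reflect isomorphisms), whereas you argue directly that the strong epimorphism $\pi_1$ now factors through the monomorphism $m$, forcing $m$ to be an isomorphism---which is in fact the same fact unpacked.
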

\begin{proof}Let $m:M\rightarrow A$ be a monomorphism, and let $u:\mathrm{Ker}(f)\rightarrow M$ and $v:C\rightarrow M$ be any morphisms with $mu=\mathrm{ker}(f)$ and $mv=g$. We have to prove that, under the assumptions of the theorem, $m$ is an isomorphism. For, consider the diagram
\begin{equation}\label{D proof of Thm pullback stable strong}\vcenter{
\xymatrix@C=3ex@R=5ex{C\times_BX\ar@{.>}[rd]^{1\times v}\ar[rrr]^{g\times1} \ar[dddd]_{\pi'_1}&&&A\times_BX\ar[rrrr]^{\pi_2}\ar[dddd]^{\pi_1}&&&&X\ar[dddd]^x\\
	&M\times_BX\ar[urr]^{m\times1}\ar[dd]_{\pi''_1}\\
	&&K\ar@{.>}[ul]_-{u'}\ar[uur]^{k'}\ar[dl]^u\ar[ddr]_{k}\\
	&M\ar[drr]_m\\
	C\ar[ur]^v\ar[rrr]_g&&&A\ar[rrrr]_f&&&&B}
 }
\end{equation}
in $\C$, which adds more arrows to diagram (\ref{lemma diagram}) as follows:
\begin{itemize}
\item Since the right-hand square of (\ref{D proof of Thm pullback stable strong}) is a pullback, we can identify $\mathrm{Ker}(\pi_2)$ with $\mathrm{Ker}(f)$, and this object is denoted by $K$; accordingly, we can write $(K,k)=(\mathrm{Ker}(f),\mathrm{ker}(f))$ and $(K,k')=(\mathrm{Ker}(\pi_2),\mathrm{Ker}(\pi_2))$.
\item $m$, $u$, and $v$ are as above.
\item $M\times_BX$ is obtained as the pullback of $fm$ and $x$; its first pullback projection is denoted by $\pi''_1$, while the second one coincides with the composite of $m\times1:M\times_BX\rightarrow A\times_BX$ and $\pi_2:A\times_BX\rightarrow X$. It follows that $(M\times_BX, \pi''_1, m\times1)$ is the pullback of $m$ and $\pi_1$, therefore $m\times1$ is a monomorphism since so is $m$. Note also that $\pi_1$ is a pullback stable regular epimorphism since so is $x$.

\item Since $mu=k=\pi_1k'$, there exists a (unique) morphism $u':K\rightarrow M\times_BX$ such that $(m\times 1)u'=k'$ (and also $\pi''_1u'=u$).

\end{itemize}
Since $k'=\mathrm{ker}(\pi_2)$ and $g\times1$ are jointly strongly epic, it follows that $m\times1$ is an isomorphism. Therefore, since $\pi_1$ is a pullback stable regular epimorphism and $(M\times_BX, \pi''_1, m\times1)$ is the pullback of $m$ and $\pi_1$, it follows that $m$ is an isomorphism (see e.g.\ Proposition 1.6 of \cite{[JST]}).
\end{proof}
Note the following simple fact:

\begin{proposition}\label{prop for them}In a commutative diagram
$$
\xymatrix@C=3pc@R=3pc{& B \ar[d]_s \ar[dl]_h \ar[dr]^{1_B} \\
	               C \ar[r]_g & A \ar[r]_f & B}
$$	
in $\C$, if $\mathrm{ker}(f)$ and $s$ are jointly strongly epic, then $(f,g)$ is a generalized strong point in $\C$. 
\end{proposition}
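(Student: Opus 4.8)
The plan is to check directly the two clauses in the definition of a generalized strong point: first that $fg$ is a pullback stable regular epimorphism (so that $(f,g)$ is a generalized point at all), and then that $g$ together with $\mathrm{ker}(f)$ is jointly strongly epic.

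For the first clause I would simply read off from the commutativity of the diagram that $fgh=fs=1_B$, so that $fg$ is a split epimorphism with chosen section $h$. Since every split epimorphism is a pullback stable regular epimorphism --- a pullback of a split epimorphism is again split, and a split epimorphism $p$ with section $t$ is the coequalizer of $tp$ and the identity --- it follows at once that $(f,g)$ is a generalized point in $\C$.

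For the second clause I would test joint strong epicness directly: let $m:M\rightarrow A$ be a monomorphism and let $u:\mathrm{Ker}(f)\rightarrow M$ and $v:C\rightarrow M$ be morphisms with $mu=\mathrm{ker}(f)$ and $mv=g$, the goal being to show that $m$ is an isomorphism. The one idea needed here is to post-compose the factorization of $g$ with $h$: from $mv=g$ and $gh=s$ we obtain $m(vh)=s$, so $s$ factors through $m$; together with the given factorization $mu=\mathrm{ker}(f)$ of $\mathrm{ker}(f)$ through $m$, the hypothesis that $\mathrm{ker}(f)$ and $s$ are jointly strongly epic forces $m$ to be an isomorphism. Hence $g$ and $\mathrm{ker}(f)$ are jointly strongly epic, and $(f,g)$ is a generalized strong point.

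I do not expect any serious obstacle here; the only points requiring a little care are the (standard) fact that split epimorphisms are pullback stable regular epimorphisms, used for the first clause, and the small but essential observation that $vh$ supplies a factorization of $s=gh$ through any monomorphism through which $g$ already factors, used for the second.
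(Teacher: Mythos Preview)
Your argument is correct. The paper does not actually supply a proof of this proposition; it merely introduces it with ``Note the following simple fact'' and moves on. Your two-step verification --- observing that $fgh=fs=1_B$ makes $fg$ a split (hence pullback stable regular) epimorphism, and then factoring $s=gh$ through any monomorphism $m$ already admitting a factorization of $g$ --- is exactly the direct argument one would expect, and it matches the characterization of jointly strongly epic used elsewhere in the paper (e.g.\ in the proof of Theorem~\ref{thm pb reflects jse}).
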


Given a generalized point $(f,g)$ as in (\ref{lemma diagram}), consider the commutative diagram
$$
\xymatrix@C=3pc@R=3pc{& C \ar[dl]_{\langle 1_C,1_C\rangle} \ar[d]_>>>>>{\langle g,1_C\rangle} \ar[dr]^{1_C} \\
	C\times_BC \ar[r]_-{g\times1} \ar[d]_{\pi'_1}&	A\times_BC \ar[d]_{\pi_1} \ar[r]_-{\pi_2} & C\ar[d]^{fg}\\
	C\ar[r]_g	&A \ar[r]_{f}& B}
$$
whose bottom part is the same as diagram (\ref{lemma diagram}) with $x=fg$. From Theorem \ref{thm pb reflects jse} and Proposition \ref{prop for them}, we obtain:

\begin{corollary} \label{Corollary 1}In the notation above, if $\langle g,1_C\rangle$ and $\mathrm{ker}(\pi_2)$ are jointly strongly epic, then $(f,g)$ is a generalized strong point. 
\end{corollary}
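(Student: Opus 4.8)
The plan is to read off the two halves of the displayed diagram and hand them, in turn, to Proposition~\ref{prop for them} and to Theorem~\ref{thm pb reflects jse}; the corollary then falls out with essentially no extra work.

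\emph{First}, I would apply Proposition~\ref{prop for them} to the upper part of the displayed diagram, namely to the triangle formed by $\langle 1_C,1_C\rangle$, $\langle g,1_C\rangle$ and $1_C$ sitting above the row $C\times_BC \xrightarrow{g\times1} A\times_BC \xrightarrow{\pi_2} C$. Matching this against the statement of the proposition, the roles of $f$, $g$, $s$, $h$ there are played by $\pi_2$, $g\times1$, $\langle g,1_C\rangle$, $\langle 1_C,1_C\rangle$, and the two commutativities required — $\pi_2\langle g,1_C\rangle=1_C$ and $(g\times1)\langle 1_C,1_C\rangle=\langle g,1_C\rangle$ — hold by construction of the diagram. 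Hence, under the hypothesis of the corollary that $\langle g,1_C\rangle$ and $\ker(\pi_2)$ are jointly strongly epic, the proposition gives that $(\pi_2,g\times1)$ is a generalized strong point; by Definition~\ref{def strong generalized points} this says exactly that $g\times1$ and $\ker(\pi_2)$ are jointly strongly epic.

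\emph{Next}, I would feed the lower part of the diagram to Theorem~\ref{thm pb reflects jse}. Because $(f,g)$ is a generalized point, the morphism $x=fg$ is a pullback stable regular epimorphism, and the lower part of the displayed diagram is precisely diagram~(\ref{lemma diagram}) instantiated at this $x$ (with $X=C$). So the hypotheses of Theorem~\ref{thm pb reflects jse} are all in place: $(f,g)$ is a generalized point, $x=fg$ is a pullback stable regular epimorphism, and $\ker(\pi_2)$ and $g\times1$ are jointly strongly epic by the previous step. The conclusion is that $\ker(f)$ and $g$ are jointly strongly epic, which is the assertion that $(f,g)$ is a generalized strong point.

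The proof is therefore just bookkeeping — lining the diagram up against the two cited results. The single substantive point is the factorization $\langle g,1_C\rangle=(g\times1)\langle 1_C,1_C\rangle$, which is what lets joint strong epimorphy pass from the pair $\{\ker(\pi_2),\langle g,1_C\rangle\}$ down to the pair $\{\ker(\pi_2),g\times1\}$ demanded by Theorem~\ref{thm pb reflects jse}; Proposition~\ref{prop for them} is exactly the lemma that encapsulates this (replacing a member of a jointly strongly epic family by a morphism through which it factors preserves joint strong epimorphy). I anticipate no real obstacle here — all the weight of the argument is carried by Theorem~\ref{thm pb reflects jse} — and one could equally well bypass the proposition and invoke that factorization directly.
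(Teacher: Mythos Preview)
Your proposal is correct and is exactly the argument the paper has in mind: the corollary is stated as an immediate consequence of Theorem~\ref{thm pb reflects jse} and Proposition~\ref{prop for them}, applied precisely as you describe (Proposition~\ref{prop for them} to the top row to pass from $\{\ker(\pi_2),\langle g,1_C\rangle\}$ to $\{\ker(\pi_2),g\times1\}$, then Theorem~\ref{thm pb reflects jse} with $x=fg$ to descend to $\{\ker(f),g\}$). There is nothing to add.
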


\section{T-protomodular categories}
Let $\mathbf{S}$ and $\mathbf{T}$ be classes of points ($=$ split generalized points) and of generalized points, respectively, in $\mathbf{C}$. Consider the following conditions:

\begin{condition}\label{Condition S}
	The class $\mathbf{S}$ has the following properties: 
	\begin{itemize}
		\item [(a)] it is pullback stable;
		\item [(b)] it is closed under finite limits in $\mathbf{Pt}(\mathbf{C})$;
		\item [(c)] all its elements are strong.
	\end{itemize}
\end{condition}
\begin{condition}\label{Condition T}
	The class $\mathbf{T}$ has the following properties: 
	\begin{itemize}
		\item [(a)] it is pullback stable;
		\item [(b)] it is closed under component-wise finite limits in $\mathbf{GPt}(\mathbf{C})$;
		\item [(c)] all its elements are strong;
		\item [(d)] in a diagram of the form
		\begin{equation*}
		\xymatrix{C\ar@{=}[d]\ar[r]^-{\langle g,1_{C}\rangle}&A\times_BC\ar[d]_{\pi_1}\ar[r]^-{\pi_2}&C\ar[d]^{fg}\\C\ar[r]_g&A\ar[r]_f&B,}
		\end{equation*}
		where $(f,g)$ is a generalized point and the right-hand square is a pullback, the top row belongs to $\mathbf{T}$ if and only if the bottom row does.
	\end{itemize}
\end{condition}
According to \cite{[BMMS]} (originally from \cite{[BMMS2]}), to say that $\mathbf{C}$ is $\mathbf{S}$-\textit{protomodular} is to say that $\mathbf{S}$ satisfies Condition \ref{Condition S}, and we introduce:
\begin{definition}\label{Def T-protomodular category}
	We say that $\mathbf{C}$ is $\mathbf{T}$-protomodular if $\mathbf{T}$ satisfies Condition \ref{Condition T}.
\end{definition}
The purpose of this section is to prove the following theorem, which in fact says that the notions of $\mathbf{S}$-protomodular and of $\mathbf{T}$-protomodular are equivalent to each other: 
\begin{theorem}\label{Bijection}
	Let $\mathbb{S}$ and $\mathbb{T}$ be the collections of all classes of points and of all classes of generalized points, respectively, in $\mathbf{C}$. Let $F:\mathbb{T}\to\mathbb{S}$ and $G:\mathbb{S}\to\mathbb{T}$ be the maps defined by
	\begin{equation*}
	F(\mathbf{T})=\mathbf{T}\cap\mathbf{Pt}(\mathbf{C}),\,\,\,G(\mathbf{S})=\{(f,g)\in\mathbf{GPt}(\mathbf{C})\,|\,(\pi_2,\langle g,1_C\rangle)\in\mathbf{S}\},
	\end{equation*}
	where $\pi_2$ and $\langle g,1_C\rangle$ are as in the diagram used in Condition 3.2(d). Then $F$ and $G$ induce inverse to each other bijections between the collection of all $\mathbf{T}\in\mathbb{T}$, such that $\mathbf{C}$ is a $\mathbf{T}$-protomodular category, and the collection of all $\mathbf{S}\in\mathbb{S}$, such that $\mathbf{C}$ is an $\mathbf{S}$-protomodular category. 
\end{theorem}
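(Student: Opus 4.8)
\section*{Proof proposal for Theorem \ref{Bijection}}

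The plan is to verify, in order: (i) that $F$ and $G$ are well-defined on the relevant sub-collections, i.e.\ $F(\mathbf{T})$ is $\mathbf{S}$-protomodular whenever $\mathbf{T}$ is $\mathbf{T}$-protomodular, and $G(\mathbf{S})$ is $\mathbf{T}$-protomodular whenever $\mathbf{S}$ is $\mathbf{S}$-protomodular; and (ii) that $FG=\mathrm{id}$ and $GF=\mathrm{id}$ on those sub-collections. For (ii) the key observation is the following correspondence: for a split generalized point $(f,s)$ (so $fs=1_B$), the diagram in Condition \ref{Condition T}(d) has top row $(\pi_2,\langle s,1_B\rangle)$ where $A\times_BB\cong A$ via $\pi_1$, and under this identification $\pi_2=f$ and $\langle s,1_B\rangle=s$; so the top row is just $(f,s)$ itself. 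Hence for any class $\mathbf{S}$ of points, $G(\mathbf{S})\cap\mathbf{Pt}(\mathbf{C})=\{(f,s)\ \text{split}\mid (f,s)\in\mathbf{S}\}=\mathbf{S}$, giving $FG=\mathrm{id}_{\mathbb{S}}$ on the nose, with no protomodularity hypothesis needed. The reverse identity $GF(\mathbf{T})=\mathbf{T}$ is exactly where Condition \ref{Condition T}(d) earns its keep: by definition $GF(\mathbf{T})=\{(f,g)\mid (\pi_2,\langle g,1_C\rangle)\in\mathbf{T}\cap\mathbf{Pt}(\mathbf{C})\}$, and since $(\pi_2,\langle g,1_C\rangle)$ is always a split generalized point, the condition $(\pi_2,\langle g,1_C\rangle)\in\mathbf{T}\cap\mathbf{Pt}(\mathbf{C})$ is equivalent to $(\pi_2,\langle g,1_C\rangle)\in\mathbf{T}$, which by Condition \ref{Condition T}(d) holds if and only if $(f,g)\in\mathbf{T}$. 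So $GF(\mathbf{T})=\mathbf{T}$.

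For (i), that $F(\mathbf{T})$ satisfies Condition \ref{Condition S} when $\mathbf{T}$ satisfies Condition \ref{Condition T}: parts (a) and (c) are immediate, since pullbacks of split generalized points in $\mathbf{GPt}(\mathbf{C})$ along morphisms into $B$ are again split (the pullback $(\pi_2,g\times1)$ of $(f,s)$ along $x$ is split by $\langle s x,1\rangle$ up to the canonical identification), so $\mathbf{T}$-stability restricts to $\mathbf{Pt}(\mathbf{C})$-stability, and strength of all elements of $\mathbf{T}$ restricts verbatim. For (b), one checks that the inclusion $\mathbf{Pt}(\mathbf{C})\hookrightarrow\mathbf{GPt}(\mathbf{C})$ creates finite limits — a finite limit of split generalized points, computed component-wise, is again split (the splittings assemble into a splitting of the limit) — so closedness under component-wise finite limits in $\mathbf{GPt}(\mathbf{C})$ gives closedness under finite limits in $\mathbf{Pt}(\mathbf{C})$.

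The substantive direction is that $G(\mathbf{S})$ satisfies Condition \ref{Condition T} when $\mathbf{S}$ satisfies Condition \ref{Condition S}. Condition \ref{Condition T}(d) for $G(\mathbf{S})$ is a formal consequence of the definition together with the remark that the top row of the Condition \ref{Condition T}(d)-diagram associated to $(f,g)$ and the one associated to its top row $(\pi_2,\langle g,1_C\rangle)$ have, up to isomorphism, the same associated split point; this is the idempotency calculation already implicit in the $FG$ argument. Condition \ref{Condition T}(c) — every $(f,g)\in G(\mathbf{S})$ is a strong generalized point — is precisely Corollary \ref{Corollary 1}: membership in $G(\mathbf{S})$ says $(\pi_2,\langle g,1_C\rangle)\in\mathbf{S}$, hence this point is strong by Condition \ref{Condition S}(c), i.e.\ $\mathrm{ker}(\pi_2)$ and $\langle g,1_C\rangle$ are jointly strongly epic, and Corollary \ref{Corollary 1} then yields that $(f,g)$ is strong. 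For Condition \ref{Condition T}(a), pullback stability: given $(f,g)\in G(\mathbf{S})$ and $x:X\to B$, I must show the associated split point of the pullback $(\pi_2,g\times1)$ lies in $\mathbf{S}$; the point is that this associated split point is itself obtained, up to canonical isomorphism, as a pullback in $\mathbf{Pt}(\mathbf{C})$ of $(\pi_2,\langle g,1_C\rangle)\in\mathbf{S}$ along a suitable morphism, whence it lies in $\mathbf{S}$ by Condition \ref{Condition S}(a). Identifying that ``suitable morphism'' and chasing the resulting cube of pullbacks is the main obstacle — the bookkeeping of several iterated pullbacks ($A\times_BX$, $C\times_BX$, $(A\times_BC)\times_{?}(\cdots)$) must be organized so that the pullback-pasting lemma delivers the claim cleanly. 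Condition \ref{Condition T}(b), closedness under component-wise finite limits, is handled similarly: a component-wise finite limit in $\mathbf{GPt}(\mathbf{C})$ of points $(f_i,g_i)$ with all $(\pi_2^i,\langle g_i,1\rangle)\in\mathbf{S}$ has associated split point equal to the corresponding finite limit in $\mathbf{Pt}(\mathbf{C})$ of the $(\pi_2^i,\langle g_i,1\rangle)$ (because the construction $A\times_BC$, $\langle g,1_C\rangle$, $\pi_2$ is built from finite limits and hence commutes with finite limits), so it lies in $\mathbf{S}$ by Condition \ref{Condition S}(b). Once (i) and (ii) are in place, $F$ and $G$ restrict to mutually inverse bijections between the two collections.
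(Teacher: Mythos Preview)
Your proposal is correct and follows essentially the same route as the paper: the same identification of $(\pi_2,\langle g,1_C\rangle)$ with $(f,g)$ for split points to get $FG=\mathrm{id}$, the same use of Condition~\ref{Condition T}(d) for $GF=\mathrm{id}$, Corollary~\ref{Corollary 1} for \ref{Condition T}(c), and the same pullback-of-the-associated-point argument for \ref{Condition T}(a) (the paper just draws out the cube you allude to, with the ``suitable morphism'' being the projection $C\times_BX\to C$). Your handling of \ref{Condition T}(b) via ``the associated-split-point functor is built from finite limits, hence preserves them'' is a slightly cleaner packaging than the paper's fibrewise-plus-change-of-base decomposition, but the underlying content is the same interchange-of-limits computation.
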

\begin{proof}
	Suppose $\mathbf{S}\in\mathbb{S}$ and $\mathbf{T}\in\mathbb{T}$ satisfy Conditions 3.1 and 3.2, respectively; we have to verify that:
	\begin{itemize}
		\item [(i)] $F(\mathbf{T})$ satisfies Condition 3.1;
		\item [(ii)] $G(\mathbf{S})$ satisfies Condition 3.2;
		\item [(iii)] $GF(\mathbf{T})=\mathbf{T}$;
		\item [(iv)] $FG(\mathbf{S})=\mathbf{S}$.
	\end{itemize}
	Let us give details of these verifications: 
	
(i) is obvious, having in mind that all finite limits in $\mathbf{Pt}(\mathbf{C})$ are component-wise.

(ii), Condition \ref{Condition S}(a): Given $(f:A\to B,g:C\to A)\in G(\mathbf{S})$ and a morphism $x:X\to B$, consider the commutative diagram
\begin{equation*}
\xymatrix@C=3ex@R=5ex{&C\times_BX\ar[dl]\ar@{=}[dd]\ar[rr]&&A\times_BC\times_BX\ar[dl]\ar[dd]\ar[rr]&&C\times_BX\ar[dl]\ar[dd]\\
	C\ar@{=}[dd]\ar[rr]&&A\times_BC\ar[dd]\ar[rr]&&C\ar[dd]\\
	&C\times_BX\ar[dl]\ar[rr]&&A\times_BX\ar[dl]\ar[rr]&&X\ar[dl]^x\\
	C\ar[rr]_g&&A\ar[rr]_f&&B}
\end{equation*}
(with obviously defined unlabeled arrows). In this diagram, since each face of the right-hand cube is a pullback diagram, we have:
\begin{itemize}
\item since $(f,g)$ belongs to $G(\mathbf{S})$, the second row belongs to $\mathbf{S}$;
\item since the second row belongs to $\mathbf{S}$, and $\mathbf{S}$ is pullback stable, the first row also belongs to $\mathbf{S}$;
\item since the first row belongs to $\mathbf{S}$, the third row, which is nothing but the pullback $(f,g)$ along $x$, belongs to $G(\mathbf{S})$.
\end{itemize}

(ii), Condition 3.2(b): Consider the extension $\mathbf{D}$ of $\mathbf{GPt}(\mathbf{C})$ defined simply as the category of all composable pairs of morphisms in $\mathbf{C}$. Define the forgetful functor $U:\mathbf{D}\to\mathbf{C}$ by $U(f:A\to B,g:C\to A)=B$, and, for an object $B$ in $\mathbf{C}$, write $(\mathbf{D}\downarrow B)$ for the fibre of this functor over $B$. $G(\mathbf{S})$ does satisfy Condition 3.2(b) since:
\begin{itemize}
	\item the component-wise limits in $\mathbf{GPt}(\mathbf{C})$ are calculated as in $\mathbf{D}$;
	\item $G(\mathbf{S})$ (obviously) contains the terminal object $(0\to 0,0\to 0)$ of $\mathbf{D}$;
	\item for every object $B$, the fibre inclusion functor $(\mathbf{D}\downarrow B)\to\mathbf{D}$ preserves pullbacks, and the same is true for $\mathbf{Pt}(\mathbf{C})$;
	\item for every morphism $x:X\to B$, the change-of-base functor $x^*:(\mathbf{D}\downarrow B)\to(\mathbf{D}\downarrow X)$ preserves all finite limits, and, again, the same is true for $\mathbf{Pt}(\mathbf{C})$.    
\end{itemize}

(ii), Condition 3.2(c): $G(\mathbf{S})$ does satisfy it by Corollary \ref{Corollary 1}.

(ii), Condition 3.2(d): $G(\mathbf{S})$ does satisfy it since (obviously) the top row of the diagram belongs to $G(\mathbf{S})$ if and only if it belongs to $\mathbf{S}$.

(iii): For a generalized point $(f,g)$, we have
\begin{equation*}
(f,g)\in GF(\mathbf{T})\Leftrightarrow(\pi_2,\langle g,1_C\rangle)\in F(\mathbf{T})\Leftrightarrow(\pi_2,\langle g,1_C\rangle)\in\mathbf{T}\Leftrightarrow (f,g)\in\mathbf{T},
\end{equation*}
where the second equivalence holds because $(\pi_2,\langle g,1_C\rangle)$ automatically belongs to $\mathbf{Pt}(\mathbf{C})$, while the third one holds by Condition \ref{Condition T}(d).

(iv): For a point $(f,g)$, we have 
\begin{equation*}
(f,g)\in FG(\mathbf{S})\Leftrightarrow(f,g)\in G(\mathbf{S})\Leftrightarrow(f,g)\in\mathbf{S},
\end{equation*}
since, when $(f,g)$ is a point, it is the same as $(\pi_2,\langle g,1_C\rangle)$.
\end{proof}
\section{A characterisation of regular Schreier extensions in the category of monoids}

Throughout this section we assume that $\C$ is a category of monoids, for which we will use the additive notation. Since it is a variety of universal algebras, a generalized point in $\C$ is a pair $(f,g)$ for which $fg$ is a surjective homomorphism (of monoids). Let us recall two known definitions (see e.g. \cite{[P]}, \cite{[MMS]}, \cite{[BMMS1]}, \cite{[BMMS2]}, \cite{[MMPS]}): 

\begin{definition}\label{Def Split Schreier} A point (=split generalized point) $(f,g)=$
	$$
	\xymatrix{B\ar[r]^g & A\ar[r]^f & B}
	$$	
(in $\C$) is said to be a Schreier point, if for every $a\in A$	there exists a unique $k\in \mathrm{Ker}(f)$ such that $a=k+gf(a)$.
\end{definition}
\begin{definition}\label{Def Schreier} Let $f:A\rightarrow B$ be a surjective homomorphism of monoids. Then:
\begin{itemize}
	\item[(a)] an element $a$ in $A$ is said to be a representative of an element $b$ in $B$, if $f(a)=b$ and for every $a'\in A$ with $f(a')=b$, there exists a unique element $k\in \mathrm{Ker}(f)$ with $a'=k+a$;
	\item[(b)] $f$ is said to be a Schreier epimorphism if every element of $B$ has a representative;
	\item[(c)] $f$ is said to be a regular Schreier epimorphism if it is a Schreier epimorphism and the set of all representatives of elements of $B$ is a submonoid of $A$.
\end{itemize}	
\end{definition}
Let us introduce:
\begin{definition}\label{def Schreier GP} A generalized point $(f,g)=$
		$$
	\xymatrix{C\ar[r]^g & A\ar[r]^f & B}
	$$	
	(in $\C$) is said to be a Schreier generalized point, if for every $a\in A$ and every $c\in C$ with $f(a)=fg(c)$, there exists a unique $k\in \mathrm{Ker}(f)$ such that $a=k+g(c)$.
\end{definition}
\begin{remark}
	Every Schreier generalized point is obviously strong.
\end{remark}

The following two theorems describe the connection between Definitions \ref{Def Split Schreier}, \ref{Def Schreier}(c), and \ref{def Schreier GP}.
\begin{theorem}\label{Thm Split Schreier, Shcreier, Schreier GP} Let $\mathbf{S}$ and $\mathbf{T}$ be the classes of Schreier points and of Schreier generalized points, respectively. Then:
\begin{itemize}
\item[(a)] $\mathbf{C}$ is an $\mathbf{S}$-protomodular cateogry in the sense of \cite{[BMMS]} (this is proved in \cite{[BMMS2]});
\item[(b)] $\C$ is a $\mathbf{T}$-protomodular category in the sense of our Definition \ref{Def T-protomodular category};
\item[(c)] $\mathbf{S}$ and $\mathbf{T}$ correspond to each other under the bijection described in Theorem \ref{Bijection}.
\end{itemize}
\end{theorem}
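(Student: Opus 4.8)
The plan is to prove the three parts essentially in the order stated, since part~(c) will do much of the work for part~(b) via Theorem~\ref{Bijection}. First, for part~(a), I would simply cite \cite{[BMMS2]}, where the $\mathbf{S}$-protomodularity of the class of Schreier points in $\mathbf{C}=\mathbf{Mon}$ is established; no new argument is needed here beyond recalling that the class of Schreier points is pullback stable, closed under finite limits in $\mathbf{Pt}(\mathbf{C})$, and consists of strong points.

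For part~(c), the key computation is to unravel the definition of $G(\mathbf{S})$ from Theorem~\ref{Bijection} when $\mathbf{S}$ is the class of Schreier points. So let $(f,g)$ be a generalized point, form the pullback square on the right of Condition~3.2(d), and ask when $(\pi_2,\langle g,1_C\rangle)$ is a Schreier point. Writing $A\times_B C=\{(a,c)\mid f(a)=fg(c)\}$, the kernel of $\pi_2\colon A\times_BC\to C$ is $\{(k,0)\mid k\in\mathrm{Ker}(f)\}\cong\mathrm{Ker}(f)$, and the splitting sends $c\mapsto(g(c),c)$. The Schreier condition for this point says: for every $(a,c)\in A\times_BC$ there is a unique element of $\mathrm{Ker}(\pi_2)$, i.e. a unique $k\in\mathrm{Ker}(f)$, with $(a,c)=(k,0)+(g(c),c)=(k+g(c),c)$. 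Since the second coordinates already match, this is exactly the requirement that $a=k+g(c)$ for a unique $k\in\mathrm{Ker}(f)$, whenever $f(a)=fg(c)$ --- which is precisely Definition~\ref{def Schreier GP}. Hence $G(\mathbf{S})=\mathbf{T}$ on the nose, and symmetrically $F(\mathbf{T})=\mathbf{T}\cap\mathbf{Pt}(\mathbf{C})$ is the class of Schreier points (a Schreier generalized point with $fg=1_B$ is manifestly a Schreier point, and conversely), so $\mathbf{S}$ and $\mathbf{T}$ correspond under the bijection. This step is mostly bookkeeping, but one must be careful to check both inclusions and to note the case $C=B$, $fg=1_B$ separately so that (c) genuinely recovers (a)'s class of points.

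Part~(b) then follows almost formally: by part~(a), $\mathbf{S}$ makes $\mathbf{C}$ an $\mathbf{S}$-protomodular category, hence lies in the domain of the bijection of Theorem~\ref{Bijection}; by part~(c), $\mathbf{T}=G(\mathbf{S})$; and Theorem~\ref{Bijection} asserts precisely that $G$ carries classes making $\mathbf{C}$ $\mathbf{S}$-protomodular to classes making $\mathbf{C}$ $\mathbf{T}$-protomodular. So $\mathbf{C}$ is $\mathbf{T}$-protomodular. Alternatively, if one prefers a self-contained argument not routed through \cite{[BMMS2]}, one can verify Condition~\ref{Condition T}(a)--(d) directly for the class of Schreier generalized points: pullback stability and closure under component-wise finite limits are checked elementwise using the explicit description of limits in $\mathbf{GPt}(\mathbf{C})$ inside composable pairs, (c) is the Remark after Definition~\ref{def Schreier GP}, and (d) is the same elementwise identity $a=k+g(c)$ appearing on both rows of the diagram.

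The main obstacle is the elementwise verification in part~(c): one must correctly identify $\mathrm{Ker}(\pi_2)$ with $\mathrm{Ker}(f)$ under the pullback, correctly transport the splitting $\langle g,1_C\rangle$, and then match the existence-and-uniqueness clause of the Schreier point condition against Definition~\ref{def Schreier GP} with no slippage in quantifiers (the quantifier ``for all $a\in A$ and $c\in C$ with $f(a)=fg(c)$'' must line up exactly with ``for all $(a,c)\in A\times_B C$''). Once that identification is pinned down, everything else is either a citation or an immediate appeal to Theorem~\ref{Bijection}.
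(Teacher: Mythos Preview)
Your proof is correct and follows essentially the same route as the paper: cite \cite{[BMMS2]} for (a), then reduce everything to the elementwise verification that $G(\mathbf{S})=\mathbf{T}$ by identifying $A\times_BC$, $\mathrm{Ker}(\pi_2)\cong\mathrm{Ker}(f)$, and matching the Schreier-point equation $(a,c)=(k,0)+(g(c),c)$ with the Schreier-generalized-point equation $a=k+g(c)$; (b) then follows from Theorem~\ref{Bijection}. Your additional direct check that $F(\mathbf{T})=\mathbf{S}$ and the sketched self-contained verification of Condition~\ref{Condition T} are correct but not needed, since $G(\mathbf{S})=\mathbf{T}$ together with Theorem~\ref{Bijection} already yields both (b) and (c).
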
	
	
\begin{proof}As follows from \ref{Thm Split Schreier, Shcreier, Schreier GP}$(a)$ (which was poved in \cite{[BMMS2]}) and Theorem \ref{Bijection}, all we need to show is that $G(\mathbf{S})=T$. That is, we only need to show that, in the notation above, $(f,g)$ is a Schreier generalized point if and only if $(\pi_2,\langle g,1_C\rangle)=$
	$$
\xymatrix@C=3pc{C\ar[r]^-{\langle g,1_C\rangle} & A\times_BC \ar[r]^-{\pi_2} & C}
	$$
is a Schreier point. However, this is straighforward, having in mind that:
\begin{itemize}
	\item $(a,c)\in A\times C$ belongs to $A\times_BC$ if and only if $a\in A$ and $c\in C$ are such that $f(a)=fg(c)$;
	\item $\mathrm{Ker}(\pi_2)=\mathrm{Ker}(f)\times\{0\}$;
	\item $(a,c)=(k,0)+\langle g,1_C\rangle \pi_2(a,c) \iff (a,c)=(k,0)+(g(c),c)\iff a=k+g(c)$.
\end{itemize}
\end{proof}
	
\begin{theorem}A monoid homomorphism $f:A\rightarrow B$ is a regular Schreier epimorphism if and only if there exists a monoid homomorphism $g:C\rightarrow A$ making $(f,g)$ a Schreier generalized point.
\end{theorem}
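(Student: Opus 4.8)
The plan is to prove the two implications separately; the forward one will be essentially immediate from the definitions, while the backward one requires a little care.

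\emph{From a regular Schreier epimorphism to a Schreier generalized point.} Suppose $f\colon A\to B$ is a regular Schreier epimorphism. I would take $C$ to be the set of all representatives of elements of $B$, which is a submonoid of $A$ precisely because $f$ is \emph{regular}, and let $g\colon C\to A$ be the inclusion homomorphism. Since $f$ is a Schreier epimorphism, every $b\in B$ admits a representative, so $fg=f|_{C}$ is surjective and $(f,g)$ is a generalized point. Finally, if $a\in A$ and $c\in C$ satisfy $f(a)=fg(c)=f(c)$, then, because $c$ is a representative of $f(c)=f(a)$, there is a unique $k\in\mathrm{Ker}(f)$ with $a=k+c=k+g(c)$; hence $(f,g)$ is a Schreier generalized point.

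\emph{From a Schreier generalized point to a regular Schreier epimorphism.} Conversely, suppose $g\colon C\to A$ makes $(f,g)$ a Schreier generalized point. Then $fg$ is surjective, hence so is $f$. Moreover, the defining property of a Schreier generalized point says precisely that, for each $c\in C$, the element $g(c)$ is a representative of $fg(c)$; since every element of $B$ has the form $fg(c)$, it follows that $f$ is a Schreier epimorphism. What remains --- and what is the real content of this direction --- is to show that the set $R\subseteq A$ of all representatives of elements of $B$ is a submonoid.

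\emph{The crux: $R$ is a submonoid.} Here the key observation is that one should \emph{not} expect $R=g(C)$; rather, $R$ is $g(C)$ enlarged by the group $N$ of invertible elements of the monoid $\mathrm{Ker}(f)$. I would prove $R=\{\,k+g(c)\mid c\in C,\ k\in N\,\}$ in three steps, all using only the Schreier generalized point property and the fact (just noted) that each $g(c)$ lies in $R$. (i) If $a\in R$, pick $c\in C$ with $fg(c)=f(a)$ and write $a=k+g(c)$ with $k\in\mathrm{Ker}(f)$; applying the representative property of $a$ to $g(c)$ gives a unique $k'\in\mathrm{Ker}(f)$ with $g(c)=k'+a$, and then the equalities $a=(k+k')+a$ and $g(c)=(k'+k)+g(c)$, together with the uniqueness clauses for $a\in R$ and $g(c)\in R$, force $k+k'=k'+k=0$, so $k\in N$. (ii) Conversely, for $k\in N$ and $c\in C$ the element $k+g(c)$ is a representative of $fg(c)$: the required existence and uniqueness of a $\mathrm{Ker}(f)$-element reduce, after adding $-k$, to those already available for $g(c)$. (iii) For $k\in N$ and $c\in C$, the Schreier generalized point property applied to $g(c)+k$ yields a unique $\tilde k\in\mathrm{Ker}(f)$ with $g(c)+k=\tilde k+g(c)$, and running the same argument with $-k$ in place of $k$ shows $\tilde k\in N$ as well. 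Granting (i)--(iii): $0=0+g(0)\in R$, and given $a_{1}=k_{1}+g(c_{1})$ and $a_{2}=k_{2}+g(c_{2})$ in $R$, step (iii) produces $\tilde k_{2}\in N$ with $g(c_{1})+k_{2}=\tilde k_{2}+g(c_{1})$, whence
\[
a_{1}+a_{2}=k_{1}+\bigl(g(c_{1})+k_{2}\bigr)+g(c_{2})=(k_{1}+\tilde k_{2})+g(c_{1}+c_{2}),
\]
and $k_{1}+\tilde k_{2}\in N$, so $a_{1}+a_{2}\in R$ by (ii). Thus $R$ is a submonoid, and $f$ is a regular Schreier epimorphism.

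\emph{Main obstacle.} The forward implication and the ``$f$ is a Schreier epimorphism'' part of the converse are routine unwindings of the definitions. The genuine difficulty is the last step: the first guess that $R$ equals $g(C)$ is false (already for a surjective homomorphism of groups, taking $C=0$), and the argument must instead keep track of the group of units of $\mathrm{Ker}(f)$ and, crucially, prove that it commutes past the image of $g$ (step (iii)); this commutation is exactly what makes $R$ closed under addition.
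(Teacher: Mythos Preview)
Your proof is correct, but both directions differ from the paper's.

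For the implication \emph{regular Schreier $\Rightarrow$ Schreier generalized point}, you take $C$ to be the submonoid of representatives itself and $g$ the inclusion, which is the most direct choice. The paper instead lets $C$ be the free monoid on $B$, picks a representative $a_b$ for each $b$, and defines $g$ by $g(\eta(b))=a_b$; the regularity hypothesis is then used to show that every $g(c)$ is a representative (as a sum of representatives). Your construction is shorter and avoids the free-monoid detour.

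For the implication \emph{Schreier generalized point $\Rightarrow$ regular Schreier}, the paper does not characterise the set $R$ of representatives; it simply observes that to check $a+a'\in R$ one may, by Proposition~3.5 of \cite{[MMPS]}, replace $a$ and $a'$ by any other representatives of the same elements, hence by some $g(c)$ and $g(c')$, whose sum $g(c+c')$ is visibly a representative. Your argument is longer but self-contained: you establish the explicit description $R=\{\,k+g(c)\mid k\in N,\ c\in C\,\}$ with $N$ the group of units of $\mathrm{Ker}(f)$, and close $R$ under addition via the commutation step~(iii). In effect, your steps (i)--(iii) reprove (and slightly sharpen) the cited proposition, so your version stands on its own and yields extra information about $R$, at the cost of more work than the paper's one-line citation.
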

\begin{proof}
``If": Suppose $(f,g)$ is a Schreier generalized point. Since $fg$ is surjective, for every $b\in B$ there exists $c\in C$ with $fg(c)=b$, making $g(c)$ a representative of $b$. Therefore, $f$ is a Schreier epimorphism. It remains to prove that if $a$ and $a'$ are representatives of $b$ and $b'$ respectively, then $a+a'$ is a representative (of $b+b'$). As follows from Proposition 3.5 of \cite{[MMPS]}, we can replace $a$ with any other representative $b$, and $a'$ with any other representative of $b'$. We replace $a$ with $g(c)$, and $a'$ with $g(c')$, where $fg(c)=b$ and $fg(c')=b'$, and we know that $g(c)+g(c')=g(c+c')$ is a representative (of $b+b'$).

``Only if": Suppose $f:A\rightarrow B$ is a regular Schreier epimorphism. Let $C$ be the free monoid on $B$, and $\eta:B\rightarrow C$ the canonical inclusion map. For each $b\in B$ choose any representative $a_b$ of $b$, and define $g:C\rightarrow B$ as the unique monoid homomorphism carrying $\eta(b)$ to $a_b$, for each $b\in B$. To prove that $(f,g)$ is a Schreier generalized point it suffices to prove that each $g(c)$ ($c\in C$) is a representative, but this follows from the fact that so are all $g(\eta(b))=a_b$ ($b\in B$) and $C$ is generated by $\eta(B)$.
\end{proof}	

\begin{corollary}
	A monoid homomorphism $f:A\rightarrow B$ is a regular Schreier epimorphism if and only if there exists a monoid homomorphism $g:C\rightarrow A$ such that $fg$ is surjective and $(\pi_2, \langle g,1_C \rangle)=$
		$$
	\xymatrix@C=3pc{C\ar[r]^-{\langle g,1_C\rangle} & A\times_BC \ar[r]^-{\pi_2} & C}
	$$
	is a Schreier point.
\end{corollary}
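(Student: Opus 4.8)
The plan is to deduce this corollary directly from Theorem 4.6 together with the equivalence established inside the proof of Theorem 4.5(c). First I would recall that, since $\C$ is a variety of universal algebras, a composable pair $(f,g)$ of monoid homomorphisms is a generalized point precisely when $fg$ is surjective; hence the clause ``$fg$ is surjective'' appearing in the statement is nothing but the requirement that $(f,g)$ be a generalized point in $\C$.

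Next, I would invoke the computation carried out in the proof of Theorem 4.5. For any generalized point $(f,g)$, writing $A\times_BC$ for the pullback of $f$ and $fg$, one has $\mathrm{Ker}(\pi_2)=\mathrm{Ker}(f)\times\{0\}$, and for $(a,c)\in A\times_BC$ the identity $(a,c)=(k,0)+\langle g,1_C\rangle\pi_2(a,c)$ holds with $k\in\mathrm{Ker}(f)$ if and only if $a=k+g(c)$. Comparing this with Definitions \ref{Def Split Schreier} and \ref{def Schreier GP}, one reads off that $(f,g)$ is a Schreier generalized point if and only if $(\pi_2,\langle g,1_C\rangle)$ is a Schreier point; this is precisely the equivalence $G(\mathbf{S})=\mathbf{T}$ proved there, and I would simply cite it.

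Finally, I would chain the equivalences: $f$ is a regular Schreier epimorphism $\iff$ (by Theorem 4.6) there exists $g:C\to A$ making $(f,g)$ a Schreier generalized point $\iff$ there exists $g:C\to A$ with $fg$ surjective and $(f,g)$ a Schreier generalized point $\iff$ there exists $g:C\to A$ with $fg$ surjective and $(\pi_2,\langle g,1_C\rangle)$ a Schreier point. I do not anticipate any genuine obstacle; the only point deserving a word of care is that the middle occurrence of ``$fg$ surjective'' is in fact redundant, being built into the notion of a generalized point, and it is automatically satisfied by the homomorphism $g$ produced in the proof of Theorem 4.6, since there $fg(\eta(b))=f(a_b)=b$ for every $b\in B$ and $\eta(B)$ generates $C$.
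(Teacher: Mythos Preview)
Your proposal is correct and matches the paper's intent: the corollary is stated without proof, as an immediate consequence of Theorem~4.6 combined with the equivalence $(f,g)\in\mathbf{T}\Leftrightarrow(\pi_2,\langle g,1_C\rangle)\in\mathbf{S}$ established in the proof of Theorem~4.5. Your observation that the clause ``$fg$ surjective'' merely restates the generalized-point condition is exactly the point.
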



\begin{thebibliography}{10}

\bibitem{[B]} D. Bourn, Normalization equivalence, kernel equivalence and affine categories, Lecture Notes in Mathematics 1488, 43-62, 1991

\bibitem{[B1]}D.\ Bourn, On the monad of internal groupoids, Theory and Application of Categories 28, 150-165, 2013

\bibitem{[BMMS2]} D.\ Bourn, N.\ Martins-Ferreira, A. Montoli,  and M. Sobral, Schreier split epimorphisms in monoids and in semirings, Textos de M\'{a}tematica (S\'{e}rie B) 45, Departamento de Matem\'{a}tica da Universidade de Coimbra, 2013

\bibitem{[BMMS1]} D.\ Bourn, N.\ Martins-Ferreira, A. Montoli,  and M. Sobral, Schreier split epimorphisms between monoids, Semigroup Forum 88, 739-752, 2014

\bibitem{[BMMS]} D.\ Bourn, N.\ Martins-Ferreira, A. Montoli,  and M. Sobral, Monoids and pointed S-protomodular categories, Homology, Homotopy and Applications 18 (1), 151-172, 2016

\bibitem{[JST]} G.\ Janelidze, M.\ Sobral, and W.\ Tholen, Beyond Barr Exactness: Effective Descent Morphisms, Encyclopedia of Mathematics and Its Applications 97, 359-405, 2004

\bibitem{[J]} T. Janelidze, Relative homological categories, Journal of Homotopy and Related Structures 1, 185-194, 2006

\bibitem{[MMS]} N.\ Martins-Ferreira, A.\ Montoli, M.\ Sobral, Semidirect products and Split Short Five Lemma in normal categories, Applied Categorical Structures 22, 687-697, 2014

\bibitem{[MMPS]} N.\ Martins-Ferreira, A. Montoli, A.\ Patchkoria,  and M. Sobral, On classification of Schreier extensions of monoids with non-abelian kernels, Forum Mathematicum 32(3), 2020

\bibitem{[P]} A.\ Patchkoria, On Schreier extensions of semimodules, PhD thesis, Tbilisi State University,
1979 (in Russian)

\bibitem{[R]} L. R\`{e}dei, Die Verallgemeinerung der Schreierischen Erweiterungstheorie, Acta Scientiarum Mathematicarum Szeged 14, 252–273, 1952




\end{thebibliography}
\end{document}